\theoremstyle{plain}
\newtheorem{thm}{Theorem}[section]
\newtheorem{lem}[thm]{Lemma}
\theoremstyle{definition}
\newtheorem{defn}[thm]{Definition}
\newtheorem{ex}[thm]{Example}
\newtheorem{rem}[thm]{Remark}
\numberwithin{equation}{section}
\begin{document}

\title[Systems with coupled nonlinear BCs]{Multiple positive solutions of systems with
coupled nonlinear BCs}
\author{Gennaro Infante}%
\address{Gennaro Infante, Dipartimento di Matematica ed Informatica, Universit\`{a} della
Calabria, 87036 Arcavacata di Rende, Cosenza, Italy}
\email{gennaro.infante@unical.it}
\author{Paolamaria Pietramala}%
\address{Paolamaria Pietramala, Dipartimento di Matematica ed Informatica, Universit\`{a}
della Calabria, 87036 Arcavacata di Rende, Cosenza, Italy}
\email{pietramala@unical.it}
\subjclass[2010]{Primary 45G15, secondary 34B10, 34B18, 47H30}
\keywords{Fixed point index, cone, non-negative solution, nonlinear boundary conditions,
coupled boundary conditions.}
\begin{abstract} 
Using the theory of fixed point index, we discuss the existence and multiplicity of non-negative solutions of a wide class of boundary value problems with coupled nonlinear boundary conditions. Our approach is fairly general and covers a variety of situations.
We illustrate our theory in an example all the constants that occur in our theory.
\end{abstract}
\maketitle
\section{Introduction}
The aim of this paper is to present a theory for the existence of positive solution for a fairly general class of systems of ordinary differential equations subject to nonlinear, nonlocal boundary conditions. In particular we are interested in systems that present a coupling in the boundary conditions (BCs);
this type of problems have been studied in \cite{Amann, Asif-khan-jmma, cui-sun, Goodrich1, Goodrich2, Leung, meh-nic, yjoa} and often occur in applications, for example when modelling the displacement of a suspension bridge subject to nonlinear controllers. 

In \cite{Lu-Yu-Liu}, Lu and co-authors, by means of the Krasnosel'ski\u\i{}-Guo Theorem on cone compressions and cone expansions, studied existence of positive solutions of the system of ordinary differential equations (ODEs)
\begin{equation}  \label{lyl}
\begin{array}{c}
u^{\prime \prime }(t)+f_{1}(t, v(t))=0,\ t \in (0,1), \\
v^{(4)}(t)=f_{2}(t, u(t)),\ t \in (0,1),%
\end{array}%
\end{equation}
subject to the BCs
\begin{equation}  \label{bc-hom1}
\begin{array}{c}
u(0)= u(1)=v(0)=v(1)=v^{\prime \prime }(0)=v^{\prime \prime
}(1)=0.%
\end{array}%
\end{equation}

The motivation, given in \cite{Lu-Yu-Liu}, for studying the BVP \eqref{lyl}-\eqref{bc-hom1} is that it can be seen as the stationary case of a model for the oscillations of the center-span of a suspension bridge, where the forth order equation represents the road-bed (seen as an elastic beam) and second order equation models the main cable (seen as a vibrating string). The BCs in this case illustrate the fact that the beam is simply supported and that the two ends of the cable are supposed to be immovable, see also, for example, \cite{Lazer-McKenna, Matas}. 

The existence of positive solutions of a coupled system with an elastic beam equation of the type
\begin{equation}  \label{sun-eq}
\begin{array}{c}
u^{\prime \prime }(t)+f_{1}(t, v(t))=0,\ t \in (0,1), \\
v^{(4)}(t)=f_{2}(t, u(t), v(t)),\ t \in (0,1),%
\end{array}%
\end{equation}
has been studied by Sun in \cite{sun}, by monotone iterative techniques, under the BCs
\begin{equation}  \label{bc-hom2}
\begin{array}{c}
u(0)= u(1)=v(0)=v(1)=v^{\prime }(0)=v^{\prime \prime}(1)=0.%
\end{array}%
\end{equation}

A common feature of the systems \eqref{sun-eq}-\eqref{bc-hom2} and \eqref{lyl}-\eqref{bc-hom1} is that the BCs under consideration are local and homogeneous.

In \cite{gifmpp-cnsns}, Infante and co-authors, by means of classical fixed point index theory, provided a fairly general theory suitable to study the existence of non-negative solutions of a variety of systems of ODEs subject to \emph{linear}, \emph{nonlocal} conditions, one example being the system
\begin{equation}  \label{eq1}
\begin{array}{c}
u^{\prime \prime }(t)+g_{1}(t)f_{1}(t,u(t),v(t))=0,\ t \in (0,1), \\
v^{(4)}(t)=g_{2}(t)f_{2}(t,u(t),v(t)),\ t \in (0,1),%
\end{array}%
\end{equation}
with the BCs
\begin{equation}  \label{bc1}
u(0)=\beta_{11}[u], u(1)=\delta_{12}[v], 
v(0)=\beta_{21}[v], v^{\prime \prime }(0)=0, v(1)=0,
 v^{\prime \prime}(1)+\delta_{22}[u]=0,%
\end{equation}
where
$\beta _{ij}[\cdot]$, $\delta _{ij}[\cdot]$ are bounded linear functionals given by Riemann-Stieltjes integrals, namely
\begin{equation*}
\beta _{ij}[w]=\int_{0}^{1}w(s)\,dB_{ij}(s),\,\,\,\,\delta _{ij}[w]=\int_{0}^{1}w(s)\,dC_{ij}(s).
\end{equation*}%
This type of formulation includes, as special cases, multi-point or integral conditions, when
\begin{equation*}
\alpha_{ij}[w]=\sum_{l=1}^{m} \alpha_{ijl}w(\eta_{ijl})\ \text{and}\
\alpha_{ij}[w]=\int_{0}^{1} {\alpha}_{ij}(s)w(s)\,ds,
\end{equation*}
see for example~\cite{hen-luca, hen-nto-pur, Jank1, Jank2, kttmna, kong1, rma, sotiris, jw-jlms, jw-gi-jlmsII}.

In the case of the system \eqref{eq1}-\eqref{bc1}, the BCs
\begin{equation*}
u(0)= \beta u(\xi), u(1)=v(1)=v^{\prime \prime}(0)= v(0)=0, v^{\prime \prime }(1)+\delta
u(\eta)=0,
\end{equation*}
can be interpreted as a cable-beam model with two devices of feedback control, where the displacement of the left end of cable is related to displacement of another point $\xi$ of the cable and the bending moment in the right end of the beam depends upon the displacement registered in a point $\eta$
of the string. We point out that not necessarily the response of the controllers needs to be of linear type, for example this happens with conditions of the type
\begin{equation*}
u(0)= H (u(\xi)), u(1)=v(1)=v^{\prime \prime }(0)= v(0)=0, v^{\prime \prime }(1)+L(u(\eta))=0;
\end{equation*}
we refer to \cite{gipp-cant} for more details regarding the illustration of nonlinear controllers on a beam. 

Our approach allows us to deal with a larger class of \emph{nonlinear} nonlocal BCs, one example given by the BCs
\begin{gather}
\begin{aligned}  \label{bc1-gen}
u(0)=H_{11}(\beta_{11}[u])+L_{11}(\delta_{11}[v]),\ u(1)&=H_{12}(\beta_{12}[u])+L_{12}(\delta_{12}[v]), \\
v(0)=H_{21}(\beta_{21}[v])+L_{21}(\delta_{21}[u]),&\ v^{\prime \prime }(0)=0,\ v(1)=0,\\ 
 \ v^{\prime \prime
}(1)+H_{22}(\beta_{22}[v])+L_{22}&(\delta_{22}[u])=0,%
\end{aligned}
\end{gather}
where $H_{ij}$, $L_{ij}$ are continuous functions. For earlier contributions on problems with nonlinear BCs we refer the reader to~\cite{amp, Cabada1, acfm-nonlin, dfdorjp, Goodrich3, Goodrich4, Goodrich5, gi-caa, gipp-cant, paola} and references therein.

Here we develop an existence theory for multiple positive solutions of the perturbed Hammerstein integral
equations of the type
\begin{gather*}
\begin{aligned}%\label{perturbed-intro}
u(t)=\sum_{i=1,2}\gamma_{1i}(t)\Bigl(H_{1i}(\beta_{1i} [u])+L_{1i}(\delta_{1i}
[v])\Bigr)+\int_{0}^{1}k_1(t,s)g_1(s)f_1(s,u(s),v(s))\,ds, \\
v(t)= \sum_{i=1,2}\gamma_{2i}(t)\Bigl(L_{2i}(\delta_{2i}
[u])+H_{2i}(\beta_{2i} [v])\Bigr)+\int_{0}^{1}k_2(t,s)g_2(s)f_2(s,u(s),v(s))\,ds.%
\end{aligned}
\end{gather*}
Similar systems of perturbed Hammerstein integral equations were studied in \cite{df-gi-do, Goodrich1, Goodrich2,gipp-ns, gipp-nonlin, gifmpp-cnsns, kang-wei, ya1}.
Our theory covers,  as a special case, the system \eqref{eq1}-\eqref{bc1-gen} and we show in an example that all the constants that occur in our theory can be computed.

We make use of the classical fixed point index theory (see
for example \cite{Amann-rev, guolak}) and also benefit of ideas from the papers
\cite{gi-caa, gipp-nonlin, gifmpp-cnsns, gi-jw-ems,  jw-gi-jlms}.

\section{Positive solutions for systems of perturbed integral equations}%\label{sec2} 
We begin with stating some assumptions on the terms that occur in the system of perturbed Hammerstein integral equations
\begin{gather}
\begin{aligned}\label{syst}
u(t)=\sum_{i=1,2}\gamma_{1i}(t)\Bigl(H_{1i}(\beta_{1i} [u])+L_{1i}(\delta_{1i}
[v])\Bigr)+F_1(u,v)(t), \\
v(t)= \sum_{i=1,2}\gamma_{2i}(t)\Bigl(L_{2i}(\delta_{2i}
[u])+H_{2i}(\beta_{2i} [v])\Bigr)+F_2(u,v)(t),%
\end{aligned}
\end{gather}
where
\begin{equation*}
F_i(u,v)(t):=\int_{0}^{1}k_i(t,s)g_i(s)f_i(s,u(s),v(s))\,ds,
\end{equation*}
namely:
\begin{itemize}
\item For every $i=1,2$, $f_i: [0,1]\times [0,\infty)\times [0,\infty) \to
[0,\infty)$ satisfies Carath\'{e}odory conditions, that is, $f_i(\cdot,u,v)$
is measurable for each fixed $(u,v)$ and $f_i(t,\cdot,\cdot)$ is continuous
for almost every (a.e.) $t\in [0,1]$, and for each $r>0$ there exists $\phi_{i,r} \in
L^{\infty}[0,1]$ such that{}
\begin{equation*}
f_i(t,u,v)\le \phi_{i,r}(t) \;\text{ for } \; u,v\in [0,r]\;\text{ and
a.\,e.} \; t\in [0,1].
\end{equation*}%
{}
\item For every $i=1,2$, $k_i:[0,1]\times [0,1]\to [0,\infty)$ is
measurable, and for every $\tau\in [0,1]$ we have
\begin{equation*}
\lim_{t \to \tau} |k_i(t,s)-k_i(\tau,s)|=0 \;\text{ for a.\,e.}\, s \in [0,1].
\end{equation*}%
{}
\item For every $i=1,2$, there exist a subinterval $[a_i,b_i] \subseteq
[0,1] $, a function $\Phi_i \in L^{\infty}[0,1]$, and a constant $c_{i} \in
(0,1]$, such that
\begin{align*}
k_i(t,s)\leq \Phi_i(s) \text{ for } &t \in [0,1] \text{ and a.\,e.}\, s\in [0,1], \\
k_i(t,s) \geq c_{i}\Phi_i(s) \text{ for } &t\in [a_i,b_i] \text{ and a.\,e.} \, s \in [0,1].
\end{align*}%
{}
\item For every $i=1,2$, $g_i\,\Phi_i \in L^1[0,1]$, $g_i \geq 0$ a.e., and $%
\int_{a_i}^{b_i} \Phi_i(s)g_i(s)\,ds >0$.
{}
\item For every $i,j=1,2$, $\beta _{ij}[\cdot ]$ and $\delta _{ij}[\cdot ]$ are linear functionals
given by
\begin{equation*}
\beta _{ij}[w]=\int_{0}^{1}w(s)\,dB_{ij}(s),\,\,\,\,\delta _{ij}[w]=\int_{0}^{1}w(s)\,dC_{ij}(s),
\end{equation*}%
involving Riemann-Stieltjes integrals; $B_{ij}$ and $C_{ij}$ are of bounded variation and $%
dB_{ij},dC_{ij}$ are \emph{positive} measure.
\item  $H_{ij},L_{ij}: [0,\infty)\to [0,\infty)$ are continuous functions such that
there exist
$h_{ij1}, h_{ij2}, l_{ij2} \in [0,\infty)$, $i,j=1,2$,
with
\begin{equation*}%\label{NBC}
 h_{ij1}w \leq H_{ij}(w)\leq  h_{ij2}w,\,\,\,\,  L_{ij}(w)\leq  l_{ij2}w,
\end{equation*}
for every $w\geq 0$.
\item $\gamma_{ij} \in C[0,1], \;\gamma_{ij}(t) \geq 0\; \;\text{for every}\;t\in [0,1],\;\;
 h_{ij2}\beta_{ij}[\gamma_{ij}] <1$ and
there exists $c_{ij} \in(0,1]$ such that
\begin{equation*}
\gamma _{ij}(t)\geq c_{ij}\| \gamma _{ij}\| _{\infty }\;\text{for every%
}\;t\in \lbrack a_{i},b_{i}],
\end{equation*}%
where $\| w\| _{\infty }:=\max \{|w(t)|,\;t\;\in \lbrack 0,1]\}$.
\item  $D_i:=(1-h_{i12}\beta_{i1}[\gamma_{i1}])(1-h_{i22}\beta_{i2}[\gamma_{i2}])
-h_{i12}h_{i22}\beta_{i1}[\gamma_{i2}]\beta_{i2}[\gamma_{i1}]> 0,\; i=1,2$.{}
\end{itemize}
It follows from $D_i>0$ that
  $$\underline{D}_i:=(1-h_{i11}\beta_{i1}[\gamma_{i1}])(1-h_{i21}\beta_{i2}[\gamma_{i2}])
-h_{i11}h_{i21}\beta_{i1}[\gamma_{i2}]\beta_{i2}[\gamma_{i1}]> 0.$$

We work in the space $C[0,1]\times C[0,1]$ endowed with the norm
\begin{equation*}
\| (u,v)\| :=\max \{\| u\| _{\infty },\| v\| _{\infty }\}.
\end{equation*}%
Let
\begin{equation*}
\tilde{K_{i}}:=\{w\in C[0,1]:w(t)\geq 0\ \text{for}\ t\in \lbrack 0,1]\,\,%
\text{and}\,\,\min_{t\in \lbrack a_{i},b_{i}]}w(t)\geq \tilde{c_{i}}\|
w\| _{\infty }\},
\end{equation*}%
where $\tilde{c_{i}}=\min \{c_{i},c_{i1},c_{i2}\}$, and consider the cone $K$
in $C[0,1]\times C[0,1]$ defined by
\begin{equation*}
\begin{array}{c}
K:=\{(u,v)\in \tilde{K_{1}}\times \tilde{K_{2}}\}.%
\end{array}%
\end{equation*}
For a \emph{positive} solution of the system \eqref{syst} we mean a solution
$(u,v)\in K$ of \eqref{syst} such that $\|(u,v)\|>0$.

Under our assumptions, it is routine to show that the integral operator
\begin{gather*}
\begin{aligned}    %\label{opT}
T(u,v)(t):=& 
\left(
\begin{array}{c}
\sum_{i=1,2}\gamma_{1i}(t)\Bigl(H_{1i}(\beta_{1i} [u])+L_{1i}(\delta_{1i}
[v])\Bigr)+F_1(u,v)(t) \\
\sum_{i=1,2}\gamma_{2i}(t)\Bigl(L_{2i}(\delta_{2i}
[u])+H_{2i}(\beta_{2i} [v])\Bigr)+F_2(u,v)(t)%
\end{array}
\right)
\\ &:=
\left(
\begin{array}{c}
T_1(u,v)(t) \\
T_2(u,v)(t)%
\end{array}
\right) ,
\end{aligned}
\end{gather*}
leaves the cone $K$ invariant and is compact, see for example Lemma 1 of~\cite{gifmpp-cnsns}.

We use the following (relative) open bounded sets in $K$:
\begin{equation*}
K_{\rho} = \{ (u,v) \in K : \|(u,v)\|< \rho \},
\end{equation*}
and
\begin{equation*}
V_\rho=\{(u,v) \in K: \min_{t\in [a_1,b_1]}u(t)<\rho\ \text{and}\ \min_{t\in
[a_2,b_2]}v(t)<\rho\}.
\end{equation*}
The set $V_\rho$ (in the context of systems) was introduced by the authors in~\cite{gipp-ns} and is equal to the set
called $\Omega^{\rho /c}$ in~\cite{df-gi-do}. $\Omega^{\rho /c}$ is an extension to the case of systems of a set given by Lan~\cite{lan}. 
For our index calculations we make use of the fact that
$$K_{\rho}\subset V_{\rho}\subset K_{\rho/c},$$ 
where $c=\min\{\tilde{c_1},\tilde{c_2}\}$. We denote by $\partial K_{\rho}$ and $\partial
V_{\rho}$ the boundary of $K_{\rho}$ and $V_{\rho}$ relative to $K$.\\

We utilize the following results of~\cite{jw-gi-jlms} regarding order preserving matrices:

\begin{defn}%\cite{jw-gi-jlms}
A $\;2 \times 2$ matrix $\mathcal{Q}$ is said to be order
preserving (or positive) if $p_{1}\geq p_{0}$, $q_{1}\geq q_{0}$
imply
\begin{equation*}
\mathcal{Q}
\begin{pmatrix}
  p_{1} \\
  q_{1}
\end{pmatrix}%
\geq \mathcal{Q}
\begin{pmatrix}
  p_{0} \\
  q_{0}
\end{pmatrix},
\end{equation*}
in the sense of components.
\end{defn}

\begin{lem}\label{lematrix2}\cite{jw-gi-jlms}
Let
\begin{equation*}
\mathcal{Q}=
\begin{pmatrix}
  a & -b \\
  -c & d
\end{pmatrix}
\end{equation*}
with $a,b,c,d\geq 0$ and $\det \mathcal{Q}> 0$. Then
$\mathcal{Q}^{-1}$ is order preserving.
\end{lem}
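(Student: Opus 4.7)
The plan is entirely computational: write down $\mathcal{Q}^{-1}$ in closed form via the $2\times 2$ cofactor formula, observe that every entry is non-negative, and then argue that a matrix with non-negative entries automatically sends componentwise-non-negative vectors to componentwise-non-negative vectors.

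First I would apply the standard inverse formula. Since $\det\mathcal{Q}=ad-bc>0$, the matrix $\mathcal{Q}$ is invertible and
\begin{equation*}
\mathcal{Q}^{-1}=\frac{1}{ad-bc}
\begin{pmatrix}
d & b \\
c & a
\end{pmatrix}.
\end{equation*}
The hypotheses $a,b,c,d\geq 0$ together with $ad-bc>0$ show that every entry of $\mathcal{Q}^{-1}$ is non-negative. I would record this as the key observation.

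Next I would verify the order-preserving property directly from the definition. Given $p_{1}\geq p_{0}$ and $q_{1}\geq q_{0}$, set $\Delta p=p_{1}-p_{0}\geq 0$ and $\Delta q=q_{1}-q_{0}\geq 0$. By linearity,
\begin{equation*}
\mathcal{Q}^{-1}\begin{pmatrix}p_{1}\\ q_{1}\end{pmatrix}-\mathcal{Q}^{-1}\begin{pmatrix}p_{0}\\ q_{0}\end{pmatrix}=\mathcal{Q}^{-1}\begin{pmatrix}\Delta p\\ \Delta q\end{pmatrix}=\frac{1}{ad-bc}\begin{pmatrix}d\,\Delta p+b\,\Delta q\\ c\,\Delta p+a\,\Delta q\end{pmatrix}.
\end{equation*}
Both components of the right-hand side are non-negative, which is precisely the componentwise inequality required in the definition of an order preserving matrix.

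There is essentially no obstacle here: the nontrivial content of the lemma is pinpointing which sign conditions on the entries of $\mathcal{Q}$ are needed, and those are already built into the hypothesis (off-diagonals $\leq 0$, diagonal $\geq 0$, positive determinant). The only small care point is to note that one cannot replace $\det\mathcal{Q}>0$ by $\det\mathcal{Q}\neq 0$: a negative determinant would flip the sign of every entry of $\mathcal{Q}^{-1}$ and destroy the monotonicity. I would mention this briefly as a remark motivating the strict positivity hypothesis used later when verifying $D_{i}>0$.
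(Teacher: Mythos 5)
Your computation is correct and is exactly the standard argument: the paper itself gives no proof of this lemma (it is imported from \cite{jw-gi-jlms}), and the proof there is precisely the cofactor-formula observation that $\mathcal{Q}^{-1}=\frac{1}{ad-bc}\begin{pmatrix} d & b\\ c & a\end{pmatrix}$ has non-negative entries, hence preserves the componentwise order. Nothing further is needed.
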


\begin{rem} \label{rem1}
It is a consequence of Lemma \ref{lematrix2} that if
 \begin{equation*}
\mathcal{N}=
\begin{pmatrix}
  1-a & -b\\
  -c & 1-d
\end{pmatrix},
\end{equation*}
 satisfies the hypotheses of Lemma \ref{lematrix2},
$p \geq 0, q \geq 0$ and $\mu>1$ then
$$\mathcal{N}_\mu^{-1}\begin{pmatrix}
  p \\
  q
\end{pmatrix}\leq \mathcal{N}^{-1}\begin{pmatrix}
  p \\
  q
\end{pmatrix},$$
where
 \begin{equation*}
\mathcal{N}_\mu=
\begin{pmatrix}
  \mu-a & -b\\
  -c & \mu-d
\end{pmatrix}.
\end{equation*}
\end{rem}
In the sequel of the paper we use the following notation.
\begin{align*}
\mathcal{K}_{ij}(s):=&\int_0^1 k_i(t,s) \,dB_{ij}(t),\,\,Q_i=\sum_{l=1,2}\beta_{i1}[\gamma_{il}]l_{il2}\delta_{il}[1],\,\,
  S_i=  \sum_{l=1,2}\beta_{i2}[\gamma_{il}]l_{il2}\delta_{il}[1],\\
\theta_{i1}=&\frac{1-h_{i22}\beta_{i2}[\gamma_{i2}]}{D_i},\,\,
  \theta_{i2}=\frac{ h_{i22}\beta_{i1}[\gamma_{i2}]}{D_i} ,\,\, \theta_{i3}=\frac{h_{i12}\beta_{i2}[\gamma_{i1}]}{D_i},\,\,
 \theta_{i4}=\frac{1-h_{i12}\beta_{i1}[\gamma_{i1}]}{D_i} ,
\end{align*}

We are now able to prove a result concerning the fixed point index on the set $K_{\rho}$.

\begin{lem}
%\label{ind1b} Assume that

\begin{enumerate}
\item[$(\mathrm{I}_{\protect\rho }^{1})$]% \label{EqB} 
there exists $\rho >0$
such that for every $i=1,2$
\begin{multline}\label{eqmestt}
 f_i^{0,\rho} \Bigl( \Bigl(\| \gamma _{i1}\| _{\infty }h_{i12}\theta_{i1}+
\| \gamma _{i2}\| _{\infty }h_{i22}\theta_{i3} \Bigr)\int_0^1 \mathcal{K}_{i1}(s)g_i(s)\,ds \\
+ \Bigl(\| \gamma _{i1}\| _{\infty }h_{i12}\theta_{i2}+
\| \gamma _{i2}\| _{\infty }h_{i22}\theta_{i4} \Bigr)\int_0^1 \mathcal{K}_{i2}(s)g_i(s)\,ds +\dfrac{1}{m_i}\Bigr)\\
+\| \gamma _{i1}\| _{\infty }h_{i12}(\theta_{i1} Q_i+\theta_{i2} S_i)+
\| \gamma _{i2}\| _{\infty }h_{i22}(\theta_{i3} Q_i+\theta_{i4} S_i)+\sum_{j=1,2}\| \gamma _{ij}\| _{\infty }l_{ij2}\delta_{ij}
[1] <1 
\end{multline}{}
where
\begin{equation*}
f_{i}^{0,{\rho }}=\sup \Bigl\{\frac{f_{i}(t,u,v)}{\rho }:\;(t,u,v)\in
\lbrack 0,1]\times \lbrack 0,\rho ]\times \lbrack 0,\rho ]\Bigr\}\ \text{and}%
\ \frac{1}{m_{i}}=\sup_{t\in \lbrack 0,1]}\int_{0}^{1}k_{i}(t,s)g_{i}(s)\,ds.
\end{equation*}%
{}
\end{enumerate}

Then the fixed point index, $i_{K}(T,K_{\rho})$, is equal to 1.
\end{lem}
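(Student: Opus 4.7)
The plan is to verify the standard index-$1$ criterion: show that $\lambda(u,v)\neq T(u,v)$ for every $(u,v)\in\partial K_\rho$ and every $\lambda\ge 1$, which yields $i_K(T,K_\rho)=1$ (see~\cite{guolak}). Suppose for contradiction that $\lambda(u,v)=T(u,v)$ holds for some such $\lambda$ and $(u,v)$. Since $\|(u,v)\|=\rho$, at least one component attains norm $\rho$; I will treat the case $\|u\|_\infty=\rho$, the other being identical after swapping rows and invoking \eqref{eqmestt} with $i=2$.

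The core step is a linear-algebraic extraction of bounds on $\beta_{11}[u]$ and $\beta_{12}[u]$. Apply $\beta_{11}[\cdot]$ and $\beta_{12}[\cdot]$ to the scalar identity $\lambda u=T_1(u,v)$; using $H_{1j}(\beta_{1j}[u])\le h_{1j2}\beta_{1j}[u]$, $L_{1j}(\delta_{1j}[v])\le l_{1j2}\delta_{1j}[v]$, and exchanging the order of integration to produce the kernels $\mathcal{K}_{1j}(s)$, I collect the $\beta_{11}[u]$, $\beta_{12}[u]$ terms on the left to arrive at a componentwise inequality $\mathcal{N}_\lambda(\beta_{11}[u],\beta_{12}[u])^T\le(R_1,R_2)^T$. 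Here $\mathcal{N}_\lambda$ is precisely the matrix of Remark~\ref{rem1} (with $a=h_{112}\beta_{11}[\gamma_{11}]$, $b=h_{122}\beta_{11}[\gamma_{12}]$, $c=h_{112}\beta_{12}[\gamma_{11}]$, $d=h_{122}\beta_{12}[\gamma_{12}]$), and $R_j=\sum_{i=1,2}\beta_{1j}[\gamma_{1i}]l_{1i2}\delta_{1i}[v]+\int_0^1\mathcal{K}_{1j}(s)g_1(s)f_1(s,u(s),v(s))\,ds$. Because $D_1>0$, Lemma~\ref{lematrix2} ensures $\mathcal{N}_1^{-1}$ is order preserving with entries exactly $\theta_{11},\theta_{12},\theta_{13},\theta_{14}$; Remark~\ref{rem1}, applicable because $\lambda\ge 1$, then yields $(\beta_{11}[u],\beta_{12}[u])^T\le \mathcal{N}_1^{-1}(R_1,R_2)^T$.

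To finish, I use the a priori majorants $\delta_{1j}[v]\le\rho\,\delta_{1j}[1]$ and $f_1(s,u(s),v(s))\le f_1^{0,\rho}\rho$ to bound $R_1\le \rho\bigl(Q_1+f_1^{0,\rho}\int_0^1\mathcal{K}_{11}(s)g_1(s)\,ds\bigr)$ and $R_2\le \rho\bigl(S_1+f_1^{0,\rho}\int_0^1\mathcal{K}_{12}(s)g_1(s)\,ds\bigr)$, producing explicit $\theta_{1k}$-weighted estimates for $\beta_{11}[u]$ and $\beta_{12}[u]$. Taking the sup-norm in $\lambda u(t)=T_1(u,v)(t)$, majorizing each $H_{1j}(\beta_{1j}[u])\le h_{1j2}\beta_{1j}[u]$, bounding $\|F_1(u,v)\|_\infty\le f_1^{0,\rho}\rho/m_1$, and substituting the $\beta_{1j}[u]$ estimates, the right-hand side telescopes into $\rho$ times exactly the left-hand side of \eqref{eqmestt} with $i=1$; by hypothesis this quantity is strictly less than $1$, so $\lambda\rho=\lambda\|u\|_\infty<\rho$, contradicting $\lambda\ge 1$. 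The main obstacle is not analytic but bookkeeping: ensuring that the $2\times 2$ inversion produces the $\theta_{1k}Q_1$, $\theta_{1k}S_1$, and $\mathcal{K}_{1j}$-integral terms with the precise weights that appear in \eqref{eqmestt}.
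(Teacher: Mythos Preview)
Your proposal is correct and follows essentially the same argument as the paper's proof: assume $\lambda(u,v)=T(u,v)$ on $\partial K_\rho$ with $\lambda\ge 1$, apply $\beta_{11},\beta_{12}$ to the first component, invert the resulting $2\times 2$ system via Lemma~\ref{lematrix2} and Remark~\ref{rem1} to obtain the $\theta_{1k}$-weighted bounds on $\beta_{1j}[u]$, then substitute back and take the supremum to reach the contradiction $\lambda\rho<\rho$. The only cosmetic difference is that you bound $\delta_{1j}[v]$ and $f_1$ immediately after the matrix inversion, whereas the paper carries $\beta_{1j}[F_1(u,v)]$ symbolically through the substitution and bounds $f_1$ only at the final supremum step; either ordering yields the same inequality.
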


\begin{proof}
We show that $\mu (u,v)\neq T(u,v)$ for every $(u,v)\in \partial K_{\rho }$
and for every $\mu \geq 1$; this ensures that the index is 1 on $K_{\rho }$.
In fact, if this does not happen, there exist $\mu \geq 1$ and $(u,v)\in
\partial K_{\rho }$ such that $\mu (u,v)=T(u,v)$. Assume, without loss of
generality, that $\| u\| _{\infty }=\rho $ and $\| v\| _{\infty
}\leq \rho $. Then
\begin{equation*}
\mu u(t)=\sum_{i=1,2}\gamma_{1i}(t)\Bigl(H_{1i}(\beta_{1i} [u])+L_{1i}(\delta_{1i}
[v])\Bigr)+F_{1}(u,v)(t)
\end{equation*}%
and therefore, since $v(t)\leq \rho ,$ for all $t\in \lbrack 0,1]$,

\begin{eqnarray}
\mu u(t)&\leq &\sum_{i=1,2}\gamma_{1i}(t)h_{1i2}\beta_{1i} [u]+\sum_{i=1,2}\gamma_{1i}(t)l_{1i2}\delta_{1i}
[\rho]+F_{1}(u,v)(t)  \label{dis2} \\
&=&\sum_{i=1,2}\gamma_{1i}(t)h_{1i2}\beta_{1i} [u]+\rho \sum_{i=1,2}\gamma_{1i}(t)l_{1i2}\delta_{1i}
[1]+F_{1}(u,v)(t).  \notag
\end{eqnarray}%
Applying $\beta _{11}$ and $\beta_{12}$ to both sides of \eqref{dis2} gives

\begin{align*}
\mu \beta_{11}[u]&\leq \sum_{i=1,2}\beta_{11}[\gamma_{1i}]h_{1i2}\beta_{1i} [u]+\rho \sum_{i=1,2}\beta_{11}[\gamma_{1i}]l_{1i2}\delta_{1i}
[1]+\beta_{11}[F_1(u,v)],\\
\mu \beta_{12}[u]&\leq \sum_{i=1,2}\beta_{12}[\gamma_{1i}]h_{1i2}\beta_{1i} [u]+\rho \sum_{i=1,2}\beta_{12}[\gamma_{1i}]l_{1i2}\delta_{1i}
[1]+\beta_{12}[F_1(u,v)].
\end{align*}
Thus we have

\begin{align*}
(\mu- h_{112}\beta_{11}[\gamma_{11}]) \beta_{11}[u]-h_{122}\beta_{11}[\gamma_{12}]\beta_{12}[u] &\leq \rho\sum_{i=1,2}\beta_{11}[\gamma_{1i}]l_{1i2}\delta_{1i}
[1]+\beta_{11}[F_1(u,v)],\\
-h_{112}\beta_{12}[\gamma_{11}]{\beta_{11}}[u]+(\mu-h_{122}\beta_{12}[\gamma_{12}]) \beta_{12}[u] &\leq \rho\sum_{i=1,2}\beta_{12}[\gamma_{1i}]l_{1i2}\delta_{1i}
[1]+\beta_{12}[F_1(u,v)],
\end{align*}
that is
\begin{gather}
\begin{aligned}\label{idx1in}
\begin{pmatrix}
\mu- h_{112}\beta_{11}[\gamma_{11}] & -h_{122}\beta_{11}[\gamma_{12}]\\
-h_{112}\beta_{12}[\gamma_{11}] & \mu-h_{122}\beta_{12}[\gamma_{12}]
\end{pmatrix}&
\begin{pmatrix}
\beta_{11}[u]\\
\beta_{12}[u]
\end{pmatrix}\\
\leq&
\begin{pmatrix}
\rho\sum_{i=1,2}\beta_{11}[\gamma_{1i}]l_{1i2}\delta_{1i}
[1]+\beta_{11}[F_1(u,v)]\\
\rho\sum_{i=1,2}\beta_{12}[\gamma_{1i}]l_{1i2}\delta_{1i}
[1]+\beta_{12}[F_1(u,v)]
\end{pmatrix}.
\end{aligned}
\end{gather}
The matrix
$$
\mathcal{M}_{\mu}=
\begin{pmatrix}
\mu-h_{112}\beta_{11}[\gamma_{11}] & -h_{122}\beta_{11}[\gamma_{12}]\\
-h_{112}\beta_{12}[\gamma_{11}] & \mu-h_{122}\beta_{12}[\gamma_{12}]
\end{pmatrix},
$$
satisfies the hypotheses of Lemma \ref{lematrix2}, thus $(\mathcal{M}_{\mu})^{-1}$ is order preserving.
If we apply
$(\mathcal{M}_{\mu})^{-1}$ to both sides of the inequality \eqref{idx1in}  we
obtain
\begin{multline*}
\begin{pmatrix}
\beta_{11}[u]\\
\beta_{12}[u]
\end{pmatrix}
\leq
\frac{1}{\det(\mathcal{M}_{\mu})}
\begin{pmatrix}
\mu-h_{122}\beta_{12}[\gamma_{12}] & h_{122}\beta_{11}[\gamma_{12}]\\
h_{112}\beta_{12}[\gamma_{11}] & \mu-h_{112}\beta_{11}[\gamma_{11}]
\end{pmatrix}
\\
\times
\begin{pmatrix}
\rho\sum_{i=1,2}\beta_{11}[\gamma_{1i}]l_{1i2}\delta_{1i}
[1]+\beta_{11}[F_1(u,v)]\\
\rho\sum_{i=1,2}\beta_{12}[\gamma_{1i}]l_{1i2}\delta_{1i}
[1]+\beta_{12}[F_1(u,v)]
\end{pmatrix},
\end{multline*}
and by Remark \ref{rem1}, we have
\begin{multline*}
\begin{pmatrix}
\beta_{11}[u]\\
\beta_{12}[u]
\end{pmatrix}
\leq
\frac{1}{D_1}
\begin{pmatrix}
1-h_{122}\beta_{12}[\gamma_{12}] & h_{122}\beta_{11}[\gamma_{12}]\\
h_{112}\beta_{12}[\gamma_{11}] & 1-h_{112}\beta_{11}[\gamma_{11}]
\end{pmatrix}
\\ 
\times
\begin{pmatrix}
\rho\sum_{i=1,2}\beta_{11}[\gamma_{1i}]l_{1i2}\delta_{1i}
[1]+\beta_{11}[F_1(u,v)]\\
\rho\sum_{i=1,2}\beta_{12}[\gamma_{1i}]l_{1i2}\delta_{1i}
[1]+\beta_{12}[F_1(u,v)]
\end{pmatrix},
\end{multline*}
that is 
$$
\begin{pmatrix}
\beta_{11}[u]\\
\beta_{12}[u]
\end{pmatrix}
\leq
\begin{pmatrix}
\theta_{11} & \theta_{12}\\
\theta_{13} & \theta_{14}
\end{pmatrix}
\begin{pmatrix}
\rho Q_1+\beta_{11}[F_1(u,v)]\\
\rho S_1+\beta_{12}[F_1(u,v)]
\end{pmatrix}.
$$
Thus
$$
\begin{pmatrix}
\beta_{11}[u]\\
\beta_{12}[u]
\end{pmatrix}
\leq
\begin{pmatrix}
\rho(\theta_{11} Q_1+\theta_{12} S_1)+\theta_{11}\beta_{11}[F_1(u,v)]+\theta_{12}\beta_{12}[F_1(u,v)]\\
\rho(\theta_{13} Q_1+\theta_{14} S_1)+\theta_{13}\beta_{11}[F_1(u,v)]+\theta_{14}\beta_{12}[F_1(u,v)]
\end{pmatrix}.
$$
Substituting into \eqref{dis2} gives

\begin{align*}
\mu u(t) \leq &\rho \Bigl(\gamma_{11}(t)h_{112}(\theta_{11} Q_1+\theta_{12} S_1)+
\gamma_{12}(t)h_{122}(\theta_{13} Q_1+\theta_{14} S_1)+\sum_{i=1,2} \gamma _{1i}(t)l_{1i2}\delta_{1i}
[1] \Bigr) \\
&+\Bigl(\gamma_{11}(t)h_{112}\theta_{11}+
\gamma_{12}(t)h_{122}\theta_{13} \Bigr)\beta_{11}[F_1(u,v)] \\
&+\Bigl(\gamma_{11}(t)h_{112}\theta_{12}+
\gamma_{12}(t)h_{122}\theta_{14} \Bigr)\beta_{12}[F_1(u,v)]\\
&+ F_1(u,v)(t)\\
=&\rho \Bigl(\gamma_{11}(t)h_{112}(\theta_{11} Q_1+\theta_{12} S_1)+
\gamma_{12}(t)h_{122}(\theta_{13} Q_1+\theta_{14} S_1)+\sum_{i=1,2} \gamma _{1i}(t) l_{1i2}\delta_{1i}
[1] \Bigr) \\
&+\Bigl(\gamma_{11}(t)h_{112}\theta_{11}+
\gamma_{12}(t)h_{122}\theta_{13} \Bigr)\int_0^1 \mathcal{K}_{11}(s)g_1(s)f_1(s,u(s),v(s))\,ds \\
&+\Bigl(\gamma_{11}(t)h_{112}\theta_{12}+
\gamma_{12}(t)h_{122}\theta_{14} \Bigr)\int_0^1 \mathcal{K}_{12}(s)g_1(s)f_1(s,u(s),v(s))\,ds+ F_1(u,v)(t).
\end{align*}

Taking the supremum over $[0,1]$ gives
\begin{align*}
\mu {\rho} \leq &\rho \Bigl(\| \gamma _{11}\| _{\infty }h_{112}(\theta_{11} Q_1+\theta_{12} S_1)+
\| \gamma _{12}\| _{\infty }h_{122}(\theta_{13} Q_1+\theta_{14} S_1)+\sum_{i=1,2}\| \gamma _{1i}\| _{\infty }l_{1i2}\delta_{1i}
[1]  \Bigr) \\
&+{\rho} f_1^{0,\rho} \Bigl(\| \gamma _{11}\| _{\infty }h_{112}\theta_{11}+
\| \gamma _{12}\| _{\infty }h_{122}\theta_{13} \Bigr)\int_0^1 \mathcal{K}_{11}(s)g_1(s)\,ds \\
&+{\rho} f_1^{0,\rho}\Bigl(\| \gamma _{11}\| _{\infty }h_{112}\theta_{12}+
\| \gamma _{12}\| _{\infty }h_{122}\theta_{14} \Bigr)\int_0^1 \mathcal{K}_{12}(s)g_1(s)\,ds+{\rho} f_1^{0,\rho} \dfrac{1}{m_1}.
\end{align*}

Using the hypothesis \eqref{eqmestt} we obtain $\mu \rho <\rho .$ This
contradicts the fact that $\mu \geq 1$ and proves the result.
\end{proof}

We give a first Lemma that shows that the index is 0 on a set
$V_{\rho}$.  

\begin{lem}%\label{idx0b1}
Assume that
\begin{enumerate}
\item[$(\mathrm{I}^{0}_{\rho})$]  there exist $\rho>0$ such that for every $i=1,2$
\begin{multline}\label{eqMest}
f_{i,(\rho, \rho/c)} \, \Bigl( \bigl(
\dfrac{c_{i1}\|\gamma_{i1}\|h_{i11}}{\underline{D}_i}(1-h_{i21}\beta_{i2}[\gamma_{i2}])
+\dfrac{c_{i2}\|\gamma_{i2}\|h_{i21}}{\underline{D}_i }h_{i11}\beta_{i2}[\gamma_{i1}]\bigr) \int_{a_i}^{b_i}
\mathcal{K}_{i1}(s)g_i(s)\,ds\\
+\bigl(
\dfrac{c_{i1}\|\gamma_{i1}\|h_{i11}}{\underline{D}_i }h_{i21}\beta_{i1}[\gamma_{i2}])
+\dfrac{c_{i2}\|\gamma_{i2}\|h_{i21}}{\underline{D}_i }(1-h_{i11}\beta_{i1}[\gamma_{i1}])\bigr) \int_{a_i}^{b_i}
\mathcal{K}_{i2}(s)g_i(s)\,ds +\dfrac{1}{M_i}\Bigr)>1,
\end{multline}{}
where
\begin{multline*}
f_{1,(\rho,{\rho / c})}=\inf \Bigl\{ \frac{f_1(t,u,v)}{ \rho}:\; (t,u,v)\in [a_1,b_1]\times[\rho,\rho/c]\times[0, \rho/c]\Bigr\},\\
f_{2,(\rho,{\rho / c})}=\inf \Bigl\{ \frac{f_2(t,u,v)}{ \rho}:\; (t,u,v)\in [a_2,b_2]\times[0,\rho/c]\times[\rho, \rho/c]\Bigr\}\\
 \text{and}\ \frac{1}{M_i}=\inf_{t\in
[a_i,b_i]}\int_{a_i}^{b_i} k_i(t,s) g_i(s)\,ds.
\end{multline*}
\end{enumerate}
Then $i_{K}(T,V_{\rho})=0$.
\end{lem}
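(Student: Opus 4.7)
My plan is to apply the standard index-zero criterion: produce a pair $(e_1, e_2) \in K \setminus \{(0,0)\}$ for which the equation $(u,v) = T(u,v) + \lambda(e_1, e_2)$ has no solution $(u,v) \in \partial V_\rho$ at any level $\lambda \geq 0$. A natural choice is the constant pair $(e_1, e_2) = (1,1)$, which lies in $K$ because $\tilde{c_i} \leq 1$ for $i=1,2$. Arguing by contradiction, suppose such $\lambda$ and $(u,v)$ exist. The structure of $\partial V_\rho$ forces either $\min_{t \in [a_1,b_1]} u(t) = \rho$ or $\min_{t \in [a_2,b_2]} v(t) = \rho$; by symmetry I treat the first. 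Membership in $\tilde{K_1}$ and $\tilde{K_2}$ then gives $u(t) \in [\rho, \rho/c]$ for $t \in [a_1,b_1]$ and $v(t) \in [0, \rho/c]$ throughout.

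The body of the argument is the lower-bound mirror of the proof of the previous lemma. Using $H_{1i}(w) \geq h_{1i1} w$, $L_{1i} \geq 0$, and $\gamma_{1i}(t) \geq c_{1i}\|\gamma_{1i}\|_\infty$ on $[a_1,b_1]$, the relation $u = T_1(u,v) + \lambda$ furnishes the pointwise estimate
\[
u(t) \geq c_{11}\|\gamma_{11}\|_\infty h_{111}\beta_{11}[u] + c_{12}\|\gamma_{12}\|_\infty h_{121}\beta_{12}[u] + F_1(u,v)(t) + \lambda, \quad t \in [a_1, b_1].
\]
Applying $\beta_{11}$ and $\beta_{12}$ to the full identity $u = T_1(u,v) + \lambda$ yields a $2 \times 2$ linear system for $(\beta_{11}[u], \beta_{12}[u])$ with coefficient matrix
\[
\underline{\mathcal{M}} = \begin{pmatrix} 1 - h_{111}\beta_{11}[\gamma_{11}] & -h_{121}\beta_{11}[\gamma_{12}] \\ -h_{111}\beta_{12}[\gamma_{11}] & 1 - h_{121}\beta_{12}[\gamma_{12}] \end{pmatrix},
\]
of determinant $\underline{D}_1 > 0$. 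By Lemma \ref{lematrix2}, $\underline{\mathcal{M}}^{-1}$ is order preserving with nonnegative entries, so inverting yields the required lower bounds on $\beta_{11}[u]$ and $\beta_{12}[u]$ in terms of $\beta_{11}[F_1(u,v)]$ and $\beta_{12}[F_1(u,v)]$.

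To close the argument, I invoke $f_1(s,u(s),v(s)) \geq f_{1,(\rho,\rho/c)}\,\rho$ for $s \in [a_1, b_1]$, which gives $\beta_{1j}[F_1(u,v)] \geq f_{1,(\rho,\rho/c)}\,\rho \int_{a_1}^{b_1} \mathcal{K}_{1j}(s) g_1(s)\,ds$ and $\min_{t \in [a_1,b_1]} F_1(u,v)(t) \geq f_{1,(\rho,\rho/c)}\,\rho / M_1$. Feeding these into the pointwise lower bound, taking $\min_{t \in [a_1,b_1]}$ (which equals $\rho$) and discarding the nonnegative $\lambda$, I arrive after division by $\rho$ at
\[
1 \geq f_{1,(\rho,\rho/c)} \cdot (\text{parenthesised quantity in } (\mathrm{I}^0_\rho)),
\]
which contradicts \eqref{eqMest}.

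I expect the main obstacle to be the algebraic bookkeeping: the coefficients $c_{i1}\|\gamma_{i1}\|_\infty h_{i11}/\underline{D}_i$ and $c_{i2}\|\gamma_{i2}\|_\infty h_{i21}/\underline{D}_i$ appearing in \eqref{eqMest} must emerge precisely from the linear combination of the two rows of $\underline{\mathcal{M}}^{-1}$ weighted by $c_{11}\|\gamma_{11}\|_\infty h_{111}$ and $c_{12}\|\gamma_{12}\|_\infty h_{121}$, and so the row/column matching has to be tracked carefully. This is nonetheless the exact lower-bound analogue of the upper-bound computation carried out in the previous lemma, so the matching will work once the signs and indices are lined up.
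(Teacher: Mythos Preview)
Your proposal is correct and follows essentially the same route as the paper: choose the constant pair $(1,1)\in K$, argue by contradiction on $\partial V_\rho$, reduce by symmetry to the case $\min_{[a_1,b_1]}u=\rho$, use $H_{1i}(w)\ge h_{1i1}w$ and $L_{1i}\ge 0$ to obtain the lower inequality, apply $\beta_{11},\beta_{12}$ and invert the matrix $\underline{\mathcal{M}}$ via Lemma~\ref{lematrix2}, then restrict to $[a_1,b_1]$ using $\gamma_{1i}(t)\ge c_{1i}\|\gamma_{1i}\|_\infty$ and $f_1\ge f_{1,(\rho,\rho/c)}\rho$ to reach a contradiction with \eqref{eqMest}. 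The only cosmetic difference is the order in which you substitute $\gamma_{1i}(t)\ge c_{1i}\|\gamma_{1i}\|_\infty$ and apply the functionals, and that you discard $\lambda$ before dividing by $\rho$ whereas the paper keeps it to conclude $\rho>\rho+\mu$; neither affects the argument.
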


\begin{proof}
Let $e(t)\equiv 1$ for $t\in [0,1]$. Then $(e,e)\in K$. We prove that
\begin{equation*}
(u,v)\ne T(u,v)+\mu (e,e)\quad\text{for } (u,v)\in \partial
V_{\rho}\quad\text{and } \mu \geq 0.
\end{equation*}
In fact, if this does not happen, there exist $(u,v)\in \partial V_\rho$ and
$\mu \geq 0$ such that $(u,v)=T(u,v)+\mu (e,e)$.
Without loss of generality, we can assume that for all $t\in [a_1,b_1]$ we have
$$
\rho\leq u(t)\leq {\rho/c},\\\ \min u(t)=\rho \\\ \text{and  }\\\ 0\leq v(t)\leq {\rho/c}.
$$
Then, for $t\in [a_1,b_1]$, we obtain
$$
u(t)= \sum_{i=1,2}\gamma_{1i}(t)\Bigl(H_{1i}(\beta_{1i} [u])+L_{1i}(\delta_{1i}[v])\Bigr)+F_{1}(u,v)(t)+ \mu e
$$
and therefore
\begin{equation}\label{diss1}
u(t)\ge \sum_{i=1,2}\gamma_{1i}(t)H_{1i}(\beta_{1i} [u])+F_{1}(u,v)(t)+ \mu e \ge \sum_{i=1,2}\gamma_{1i}(t)h_{1i1}\beta_{1i} [u]+F_{1}(u,v)(t)
+\mu e.
\end{equation}
Applying $\beta_{11}$ and $\beta_{12}$ to both sides of \eqref{diss1} gives
\begin{align*}
\beta_{11}[u] &\ge h_{111}\beta_{11}[\gamma_{11}]\beta_{11}[u]+h_{121}\beta_{11}[\gamma_{12}]\beta_{12}[u]+\beta_{11}[F_1(u,v)]+\mu \beta_{11} [e],\\
\beta_{12}[u] &\ge h_{111}\beta_{12}[\gamma_{11}]\beta_{11}[u]+h_{121}\beta_{12}[\gamma_{12}]\beta_{12}[u]+\beta_{12}[F_1(u,v)]+\mu \beta_{12} [e].
\end{align*}
Thus we have

\begin{align*}
(1-h_{111}\beta_{11}[\gamma_{11}]) \beta_{11}[u] -h_{121}\beta_{11}[\gamma_{12}]\beta_{12}[u] &\ge \beta_{11}[F_1(u,v)]+\mu \beta_{11} [e],\\
-h_{111}\beta_{12}[\gamma_{11}]{\beta_{11}}[u]+(1-h_{121}\beta_{12}[\gamma_{12}]) \beta_{12}[u] &\ge \beta_{12}[F_1(u,v)]+ \mu \beta_{12}[e],
\end{align*}

that is

\begin{multline*}
\begin{pmatrix}
1-h_{111}\beta_{11}[\gamma_{11}] & -h_{121}\beta_{11}[\gamma_{12}]\\
-h_{111}\beta_{12}[\gamma_{11}] & 1-h_{121}\beta_{12}[\gamma_{12}]
\end{pmatrix}
\begin{pmatrix}
\beta_{11}[u]\\
\beta_{12}[u]
\end{pmatrix}
\\
\geq
\begin{pmatrix}
\beta_{11}[F_1(u,v)]+\mu \beta_{11}[e]\\
\beta_{12}[F_1(u,v)]+ \mu \beta_{12}[e]
\end{pmatrix}
\geq
\begin{pmatrix}
\beta_{11}[F_1(u,v)]\\
\beta_{12}[F_1(u,v)]
\end{pmatrix}.
\end{multline*}

The matrix
$$
\underline{\mathcal{M}}_1=
\begin{pmatrix}
1-h_{111}\beta_{11}[\gamma_{11}] & -h_{121}\beta_{11}[\gamma_{12}]\\
-h_{111}\beta_{12}[\gamma_{11}] & 1-h_{121}\beta_{12}[\gamma_{12}]
\end{pmatrix}
$$
satisfies the hypotheses of Lemma \ref{lematrix2}, thus $(\underline{\mathcal{M}}_1)^{-1}$ is order preserving.
If we apply
$(\underline{\mathcal{M}}_1)^{-1}$ to both sides of the last inequality  we
obtain
$$
\begin{pmatrix}
\beta_{11}[u]\\
\beta_{12}[u]
\end{pmatrix}
\geq
\frac{1}{\underline{D}_1}
\begin{pmatrix}
1-h_{121}\beta_{12}[\gamma_{12}] & h_{121}\beta_{11}[\gamma_{12}]\\
h_{111}\beta_{12}[\gamma_{11}] & 1-h_{111}\beta_{11}[\gamma_{11}]
\end{pmatrix}
\begin{pmatrix}
\beta_{11}[F_1(u,v)]\\
\beta_{12}[F_1(u,v)]
\end{pmatrix}
$$
and therefore
\begin{align*}
u(t)\geq &\Bigl(\dfrac{\gamma_{11}(t)}{\underline{D}_1}h_{111}(1-h_{121}\beta_{12}[\gamma_{12}])+
\dfrac{\gamma_{12}(t)}{\underline{D}_1}h_{121}h_{111}\beta_{12}[\gamma_{11}]\Bigr)\\
&\times\int_0^1 \mathcal{K}_{11}(s)g_1(s)f_1(s,u(s),v(s))\,ds\\
&+\Bigl(\dfrac{\gamma_{11}(t)}{\underline{D}_1}h_{111}h_{121}\beta_{11}[\gamma_{12}]+
\dfrac{\gamma_{12}(t)}{\underline{D}_1}(1-h_{111}\beta_{11}[\gamma_{11}])h_{121}\Bigr)\\
& \times \int_0^1 \mathcal{K}_{12}(s)g_1(s)f_1(s,u(s),v(s))\,ds\\
&+  \int_0^1 k_{1}(t,s)g_1(s)f_1(s,u(s),v(s))\,ds + \mu.
\end{align*}
Then we have, for $t\in [a_1,b_1]$,
\begin{align*}
  u(t)\geq &\Bigl(\dfrac{c_{11}\|\gamma_{11}\|}{\underline{D}_1}h_{111}(1-h_{121}\beta_{12}[\gamma_{12}])+
\dfrac{c_{12}\|\gamma_{12}\|}{\underline{D}_1}h_{121}h_{111}\beta_{12}[\gamma_{11}]\Bigr)\\
& \times \int_{a_1}^{b_1}  \mathcal{K}_{11}(s)g_1(s)f_1(s,u(s),v(s))\,ds\\
& +\Bigl(\dfrac{c_{11}\|\gamma_{11}\|}{\underline{D}_1}h_{111}h_{121}\beta_{11}[\gamma_{12}]+
\dfrac{c_{12}\|\gamma_{12}\|}{\underline{D}_1}(1-h_{111}\beta_{11}[\gamma_{11}])h_{121}\Bigr)\\
& \times \int_{a_1}^{b_1}
 \mathcal{K}_{12}(s)g_1(s)f_1(s,u(s),v(s))\,ds +\int_{a_1}^{b_1}k_1(t,s)g_1(s)f_1(s,u(s),v(s))\,ds+{\mu}.
\end{align*}
Taking the minimum over $[a_{1},b_{1}]$ gives
\begin{align*}
\rho= \min_{t\in \lbrack a_{1},b_{1}]}u(t)\geq &{\rho }f_{1,(\rho ,{\rho /c})}\Bigl(\dfrac{c_{11}\|\gamma_{11}\|}{\underline{D}_1}h_{111}(1-h_{121}\beta_{12}[\gamma_{12}])+
\dfrac{c_{12}\|\gamma_{12}\|}{\underline{D}_1}h_{121}h_{111}\beta_{12}[\gamma_{11}]\Bigr)\\
& \times \int_{a_{1}}^{b_{1}}\mathcal{K}_{11}(s)g_{1}(s)\,ds\\
& +{\rho }f_{1,(\rho ,{\rho /c})}\Bigl(\dfrac{c_{11}\|\gamma_{11}\|}{\underline{D}_1}h_{111}h_{121}\beta_{11}[\gamma_{12}]+
\dfrac{c_{12}\|\gamma_{12}\|}{\underline{D}_1}(1-h_{111}\beta_{11}[\gamma_{11}])h_{121}\Bigr)\\
& \times \int_{a_1}^{b_1}
 \mathcal{K}_{12}(s)g_1(s)\,ds +\rho f_{1,(\rho ,{\rho /c})}\frac{1}{M_{1}}+{\mu}.
\end{align*}
Using the hypothesis \eqref{eqMest} we obtain $\rho>\rho +\mu $, a contradiction.
\end{proof}

The following Lemma provides a result of index 0 on $V_{\rho}$ of a different flavour; the idea
is to control the growth of just one nonlinearity $f_i$, at the cost of
having to deal with a larger domain. The proof is omitted as it follows from the previous proof, for details see \cite{gipp-nonlin,gifmpp-cnsns}. We mention that nonlinearities with different growth were studied also in ~\cite{precup1,precup2,ya1}.
\begin{lem}%\label{idx0b3}
Assume that
\begin{enumerate}
\item[$(\mathrm{I}^{0}_{\rho})^{\star}$] there exist $\rho>0$ such that for some $i=1,2$
\begin{multline*}%\label{eqMest1}
f^*_{i,(0, \rho/c)} \, \Bigl( \bigl(
\dfrac{c_{i1}\|\gamma_{i1}\|h_{i11}}{\underline{D}_i}(1-h_{i21}\beta_{i2}[\gamma_{i2}])
+\dfrac{c_{i2}\|\gamma_{i2}\|h_{i21}}{\underline{D}_i}h_{i11}\beta_{i2}[\gamma_{i1}]\bigr) \int_{a_i}^{b_i}
\mathcal{K}_{i1}(s)g_i(s)\,ds\\
+\bigl(
\dfrac{c_{i1}\|\gamma_{i1}\|h_{i11}}{\underline{D}_i }h_{i21}\beta_{i1}[\gamma_{i2}])
+\dfrac{c_{i2}\|\gamma_{i2}\|h_{i21}}{\underline{D}_i}(1-h_{i11}\beta_{i1}[\gamma_{i1}])\bigr) \int_{a_i}^{b_i}
\mathcal{K}_{i2}(s)g_i(s)\,ds +\dfrac{1}{M_i}\Bigr)>1.\
\end{multline*}{}
\end{enumerate}
where
\begin{equation*}
f^*_{i,(0,{\rho / c})}=\inf \Bigl\{ \frac{f_i(t,u,v)}{ \rho}:\; (t,u,v)\in [a_i,b_i]\times[0,\rho/c]\times[0, \rho/c]\Bigr\}.
\end{equation*}
Then $i_{K}(T,V_{\rho})=0$.
\end{lem}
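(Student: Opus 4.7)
The plan is to mirror the preceding lemma's argument, adapting it in two small but essential ways. First, since the hypothesis $(\mathrm{I}^0_\rho)^\star$ is posited for only one index $i \in \{1,2\}$, I would fix that index (say $i = 1$, without loss of generality) and work throughout with the first component equation $u = T_1(u,v) + \mu e$ of the homotopy $(u,v) = T(u,v) + \mu(e,e)$. Second, because the hypothesis no longer restricts the first argument of $f_1$ to the sub-range $[\rho,\rho/c]$, I would use the weaker lower bound $f_1(s,u(s),v(s)) \geq \rho f^*_{1,(0,\rho/c)}$ on $[a_1,b_1]$; this is valid since the inclusion $V_\rho \subset K_{\rho/c}$ forces $u(s), v(s) \in [0,\rho/c]$ throughout $[0,1]$.

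Arguing by contradiction, I would assume there exist $(u,v) \in \partial V_\rho$ and $\mu \geq 0$ with $(u,v) = T(u,v) + \mu(e,e)$. The key topological fact to record up front is that every point of $\partial V_\rho$ satisfies $\min_{t\in [a_1,b_1]} u(t) \leq \rho$: since $V_\rho$ is defined by the conjunction of two strict inequalities, its boundary lies inside the set where both weak inequalities hold. From $u(t) = T_1(u,v)(t) + \mu$ I would apply the lower bound $H_{1j}(w) \geq h_{1j1} w$ to obtain the pointwise inequality analogous to \eqref{diss1}.

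Applying $\beta_{11}$ and $\beta_{12}$ to both sides and rearranging would yield the same $2\times 2$ matrix inequality as before, now with left-hand matrix $\underline{\mathcal{M}}_1$. Since $\underline{D}_1 > 0$ by the standing assumption, this matrix satisfies the hypotheses of Lemma \ref{lematrix2}, so its inverse is order preserving; applying it gives lower estimates on $\beta_{11}[u]$ and $\beta_{12}[u]$ in terms of $\beta_{11}[F_1(u,v)]$ and $\beta_{12}[F_1(u,v)]$. Substituting back into the pointwise inequality for $u(t)$, restricting to $t \in [a_1,b_1]$ where $\gamma_{1j}(t) \geq c_{1j}\|\gamma_{1j}\|_\infty$, and finally taking the minimum over $[a_1,b_1]$, I would reproduce the chain of estimates from the previous proof with every occurrence of $f_{1,(\rho,\rho/c)}$ replaced by $f^*_{1,(0,\rho/c)}$.

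The only point requiring genuine attention rather than purely routine bookkeeping is the initial topological observation, since we can no longer localize to the case $\min u = \rho$ as in the preceding lemma; once it is in hand, the strict inequality in $(\mathrm{I}^0_\rho)^\star$ gives $\min_{t\in [a_1,b_1]} u(t) > \rho + \mu \geq \rho$, directly contradicting $\min_{t\in [a_1,b_1]} u(t) \leq \rho$ on $\partial V_\rho$. This rules out any fixed point of the homotopy on $\partial V_\rho$, and the standard fixed point index computation then delivers $i_K(T, V_\rho) = 0$.
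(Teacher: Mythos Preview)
Your proposal is correct and is precisely the adaptation of the preceding lemma's proof that the paper alludes to (the paper omits the argument, referring to the previous proof and to \cite{gipp-nonlin,gifmpp-cnsns}). In particular, you have correctly isolated the one nontrivial change: since the hypothesis now fixes a single index $i$, one cannot assume without loss of generality that $\min_{[a_i,b_i]} u = \rho$, but only that $\min_{[a_i,b_i]} u \leq \rho$ on $\partial V_\rho$, and the enlarged domain $[0,\rho/c]\times[0,\rho/c]$ in the definition of $f^*_{i,(0,\rho/c)}$ is exactly what makes the lower bound on $f_i$ available without the extra localization.
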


The above Lemmas can be combined to prove the following Theorem, here we
deal with the existence of at least one, two or three solutions. We stress
that, by expanding the lists in conditions $(S_{5}),(S_{6})$ below, it is
possible, in a similar way as in~\cite{kljdeds}, to state results for four or more positive solutions. We omit
the proof which follows from the properties of fixed point index.

\begin{thm}
\label{thmmsol1} The system \eqref{syst} has at least one positive solution
in $K$ if either of the following conditions hold.

\begin{enumerate}

\item[$(S_{1})$] There exist $\rho _{1},\rho _{2}\in (0,\infty )$ with $\rho
_{1}/c<\rho _{2}$ such that $(\mathrm{I}_{\rho _{1}}^{0})\;\;[\text{or}\;(%
\mathrm{I}_{\rho _{1}}^{0})^{\star }],\;\;(\mathrm{I}_{\rho _{2}}^{1})$ hold.

\item[$(S_{2})$] There exist $\rho _{1},\rho _{2}\in (0,\infty )$ with $\rho
_{1}<\rho _{2}$ such that $(\mathrm{I}_{\rho _{1}}^{1}),\;\;(\mathrm{I}%
_{\rho _{2}}^{0})$ hold.
\end{enumerate}

The system \eqref{syst} has at least two positive solutions in $K$ if one of
the following conditions hold.

\begin{enumerate}

\item[$(S_{3})$] There exist $\rho _{1},\rho _{2},\rho _{3}\in (0,\infty )$
with $\rho _{1}/c<\rho _{2}<\rho _{3}$ such that $(\mathrm{I}_{\rho
_{1}}^{0})\;\;[\text{or}\;(\mathrm{I}_{\rho _{1}}^{0})^{\star }],\;\;(%
\mathrm{I}_{\rho _{2}}^{1})$ $\text{and}\;\;(\mathrm{I}_{\rho _{3}}^{0})$
hold.

\item[$(S_{4})$] There exist $\rho _{1},\rho _{2},\rho _{3}\in (0,\infty )$
with $\rho _{1}<\rho _{2}$ and $\rho _{2}/c<\rho _{3}$ such that $(\mathrm{I}%
_{\rho _{1}}^{1}),\;\;(\mathrm{I}_{\rho _{2}}^{0})$ $\text{and}\;\;(\mathrm{I%
}_{\rho _{3}}^{1})$ hold.
\end{enumerate}

The system \eqref{syst} has at least three positive solutions in $K$ if one
of the following conditions hold.

\begin{enumerate}
\item[$(S_{5})$] There exist $\rho _{1},\rho _{2},\rho _{3},\rho _{4}\in
(0,\infty )$ with $\rho _{1}/c<\rho _{2}<\rho _{3}$ and $\rho _{3}/c<\rho
_{4}$ such that $(\mathrm{I}_{\rho _{1}}^{0})\;\;[\text{or}\;(\mathrm{I}%
_{\rho _{1}}^{0})^{\star }],$ $(\mathrm{I}_{\rho _{2}}^{1}),\;\;(\mathrm{I}%
_{\rho _{3}}^{0})\;\;\text{and}\;\;(\mathrm{I}_{\rho _{4}}^{1})$ hold.

\item[$(S_{6})$] There exist $\rho _{1},\rho _{2},\rho _{3},\rho _{4}\in
(0,\infty )$ with $\rho _{1}<\rho _{2}$ and $\rho _{2}/c<\rho _{3}<\rho _{4}$
such that $(\mathrm{I}_{\rho _{1}}^{1}),\;\;(\mathrm{I}_{\rho
_{2}}^{0}),\;\;(\mathrm{I}_{\rho _{3}}^{1})$ $\text{and}\;\;(\mathrm{I}%
_{\rho _{4}}^{0})$ hold.
\end{enumerate}
\end{thm}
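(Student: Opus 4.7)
The plan is to combine the three preceding lemmas with the additivity (equivalently, excision/solution) property of the fixed point index. Recall from the setup that $K_{\rho}\subset V_{\rho}\subset K_{\rho/c}$, so a strict separation $\rho_i<\rho_{i+1}$ (respectively $\rho_i/c<\rho_{i+1}$) between the radii of two consecutive $K$-sets (respectively a $V$-set followed by a $K$-set) guarantees a strict inclusion of closures and hence a nonempty open annulus in $K$ on which additivity can be applied. The preceding lemmas are used precisely to pin down the index on each of these sets: $i_K(T,K_{\rho})=1$ under $(\mathrm{I}_{\rho}^1)$, and $i_K(T,V_{\rho})=0$ under either $(\mathrm{I}_{\rho}^0)$ or $(\mathrm{I}_{\rho}^0)^{\star}$.

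For $(S_1)$, the hypothesis $\rho_1/c<\rho_2$ yields $\overline{V_{\rho_1}}\subset K_{\rho_1/c}\subset K_{\rho_2}$, and the two lemmas give $i_K(T,V_{\rho_1})=0$ together with $i_K(T,K_{\rho_2})=1$. Additivity then produces
\[
i_K(T,K_{\rho_2}\setminus \overline{V_{\rho_1}})=1-0=1,
\]
hence a fixed point $(u,v)\in K_{\rho_2}\setminus \overline{V_{\rho_1}}$; since $(u,v)\notin V_{\rho_1}$ forces $\min_{[a_i,b_i]}u\ge \rho_1$ or $\min_{[a_i,b_i]}v\ge \rho_1$ for some $i$, in particular $\|(u,v)\|>0$ and $(u,v)$ is a positive solution in $K$. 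For $(S_2)$, the inclusions $\overline{K_{\rho_1}}\subset K_{\rho_2}\subset V_{\rho_2}$ follow from $\rho_1<\rho_2$, and one obtains
\[
i_K(T,V_{\rho_2}\setminus \overline{K_{\rho_1}})=0-1=-1,
\]
again producing a positive fixed point.

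The multiple-solution cases are then proved by iterating the single-solution argument along the chain of nested sets. The inequalities listed in $(S_3)$--$(S_6)$ are chosen exactly so that the corresponding closures are strictly nested: $\rho_i<\rho_{i+1}$ is used between $\overline{K_{\rho_i}}$ and the next larger set, and $\rho_i/c<\rho_{i+1}$ between $\overline{V_{\rho_i}}$ and the next larger set. Consequently the fixed point index alternates between the values $1$ and $0$ along the chain, and each of the two or three resulting open annuli in $K$ carries a nontrivial index, producing a distinct positive fixed point by additivity. For instance, in $(S_5)$ the chain $\overline{V_{\rho_1}}\subset K_{\rho_2}\subset \overline{K_{\rho_2}}\subset V_{\rho_3}\subset \overline{V_{\rho_3}}\subset K_{\rho_4}$ carries the index pattern $0,1,0,1$, yielding three positive solutions, while $(S_6)$ starts from a $K$-set and gives the pattern $1,0,1,0$.

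The only subtle point is the bookkeeping: one must check, for each pair of consecutive radii, that the separation stated in the hypothesis delivers the strict inclusion of the smaller closed set into the larger open set, so that the additivity formula applies and no fixed point on the common boundary is missed. The ruling out of boundary fixed points is contained in the proofs of the preceding lemmas (the contradictions there are with $\mu\geq 1$ and $\mu\geq 0$, so they exclude $(u,v)\in \partial K_{\rho}$, respectively $(u,v)\in \partial V_{\rho}$). With these verifications, the theorem follows by standard fixed point index arguments as in~\cite{Amann-rev, guolak}, and extensions to four or more positive solutions proceed by the same scheme applied to longer lists of radii, in the spirit of~\cite{kljdeds}.
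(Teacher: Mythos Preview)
Your proposal is correct and follows exactly the approach the paper has in mind: the authors explicitly omit the proof, stating that it ``follows from the properties of fixed point index,'' and your argument spells out precisely that standard computation using the nesting $K_{\rho}\subset V_{\rho}\subset K_{\rho/c}$ together with the index values supplied by the three preceding lemmas. The only cosmetic remark is that in case $(S_2)$ you should note explicitly (as you do implicitly) that positivity of the solution comes from $\|(u,v)\|\ge \rho_1>0$, parallel to your comment for $(S_1)$.
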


\begin{rem}%\label{remark27} 
If the nonlinearities $f_{1}$ and $f_{2}$ have
some extra positivity properties, for example if the
condition $(S_{1})$ holds and moreover we assume that $f_{1}(t,0,v)>0$ in $%
[a_{1},b_{1}]\times \{0\}\times \lbrack 0,\rho _{2}]$ and $f_{2}(t,u,0)>0$
in $[a_{2},b_{2}]\times \lbrack 0,\rho _{2}]\times \{0\}$, then the solution $%
(u,v)$ of the system \eqref{syst} is such that $\| u\| _{\infty }$ and
$\| v\| _{\infty }$ are strictly positive. %The Remark $2$ of \cite{gipp-nonlin} should read accordingly.
\end{rem}

\section{An application to coupled systems of BVPs}

We study the existence of positive solutions for the system of second order ODEs
\begin{equation}  \label{eq3.1}
\begin{array}{c}
u^{\prime \prime }(t)+g_{1}(t)f_{1}(t,u(t),v(t))=0,\ t \in (0,1), \\
v^{(4)}(t)=g_{2}(t)f_{2}(t,u(t),v(t)),\ t \in (0,1),%
\end{array}%
\end{equation}
with the nonlocal nonlinear BCs
\begin{gather}
\begin{aligned}   \label{3.bc1}
u(0)=H_{11}(\beta_{11}[u])+L_{11}(\delta_{11}[v]),\ u(1)&=H_{12}(\beta_{12}[u])+L_{12}(\delta_{12}[v]), \\
v(0)=H_{21}(\beta_{21}[v])+L_{21}(\delta_{21}[u]),&\ v^{\prime \prime }(0)=0,\ v(1)=0,\\ 
 \ v^{\prime \prime
}(1)+H_{22}(\beta_{22}[v])+L_{22}&(\delta_{22}[u])=0,%
\end{aligned}
\end{gather}

This differential system can be rewritten in the integral form
\begin{gather*}
\begin{aligned}% \label{syst2}
u(t)=&(1-t)(H_{11}(\beta_{11}[u])+L_{11}(\delta_{11}[v]))+t(H_{12}(\beta_{12}[u])+L_{12}(\delta_{12}[v]))\\ &+\int_{0}^{1}k_1(t,s)g_1(s)f_1(s,u(s),v(s))\,ds, \\
v(t)=&(1-t)(H_{21}(\beta_{21}[v])+L_{21}(\delta_{21}[u]))+\frac{1}{6}t(1-t^{2})(H_{22}(\beta_{22}[v])+L_{22}(\delta_{22}[u]))\\ &+\int_{0}^{1}k_2(t,s)g_2(s)f_2(s,u(s),v(s))\,ds,%
\end{aligned}
\end{gather*}
where
\begin{equation*}
k_1(t,s)=%
\begin{cases}
s(1-t),\, & s \leq t, \\
t(1-s),\, & s>t,%
\end{cases}
\quad \text{and}\quad k_2(t,s)=
\begin{cases}
\frac{1}{6} s(1-t)(2t-s^{2}-t^{2}),\,  & s\leq t, \\
\frac{1}{6} t(1-s)(2s-t^{2}-s^{2}),\,  & s>t,%
\end{cases}%
\end{equation*}
are non-negative continuous functions on $[0,1]\times[0,1]$.

The intervals $[a_1,b_1]$ and $[a_2,b_2]$ may be chosen arbitrarily in $%
(0,1) $. It is easy to check that
\begin{equation*}
k_1(t,s) \leq s(1-s):=\Phi_1(s), \quad \min_{t \in [a_1,b_1]}k_1(t,s) \geq
c_1 s(1-s),
\end{equation*}
where $c_{1}=\min\{1-b_1,a_1\}$. Furthermore, see~\cite{jw-gi-df}, we have that
\begin{equation*}
k_2(t,s) \leq \Phi_2(s):=
\begin{cases}
\frac{\sqrt{3}}{27}s (1-s^{2})^{\frac{3}{2}}, & \;\; \text{for} \;\; 0\leq
s\leq \frac{1}{2}, \\
\frac{\sqrt{3}}{27}(1-s) s^{\frac{3}{2}}(2-s)^{\frac{3}{2}}, & \;\; \text{for%
} \;\; \frac{1}{2} < s\leq 1,%
\end{cases}%
\end{equation*}
and
\begin{equation*}
k_2(t,s) \geq c_2(t) \Phi_2(s),
\end{equation*}
where
\begin{equation*}
c_{2}(t)=
\begin{cases}
\frac{3\sqrt{3}}{2}t(1-t^{2}), & \;\; \text{for} \;\; t \in [0,1/2], \\
\frac{3\sqrt{3}}{2}t(1-t)(2-t), & \;\; \text{for} \;\; t \in (1/2,1],%
\end{cases}%
\end{equation*}
so that
\begin{equation*}
c_{2}=\min_{t \in [a_2,b_2]}c_{2}(t)>0.
\end{equation*}
The existence of multiple solutions of the system~\eqref{eq3.1}-\eqref{3.bc1} follows from Theorem~\ref{thmmsol1}.

The nonlinearities that occurs in the next example, taken from~\cite{gifmpp-cnsns}, are used to illustrate, under a mathematical point of view, the constants that occur in our theory.
\begin{ex}%\label{ex1} 
Consider the system
\begin{gather}
\begin{aligned}  \label{eq2}
u''+(1/8)(u^3+t^3v^3)+ 2=&0,\ t \in (0,1), \\
v^{(4)}=\sqrt{tu}+13v^2&,\ t \in (0,1), \\
u(0)=H_{11}(u(1/4))+L_{11}(v(1/4)),\ u(1)&=H_{12}(u(3/4))+L_{12}(v(3/4)), \\
v(0)=H_{21}(v(1/3))+L_{21}(u(1/3)),&\ v^{\prime \prime }(0)=0,\ v(1)=0,\\ 
 \ v^{\prime \prime
}(1)+H_{22}(v(2/3))+L_{22}&(u(2/3))=0,%
\end{aligned}
\end{gather}
where the nonlocal conditions are given by the functionals $%
\beta_{ij}[w]=\delta_{ij}[w]=w(\eta_{ij})$ and the functions $H_{ij}$ and $L_{ij}$ 
satisfy the condition
\begin{equation*}
 h_{ij1}w \leq H_{ij}(w)\leq  h_{ij2}w,\,\,\,\,  L_{ij}(w)\leq  l_{ij2}w,
\end{equation*}
with
$$
h_{111}=\frac{1}{6}, h_{112}=\frac{1}{2}, h_{121}=\frac{1}{9}, h_{122}=\frac{1}{3}, h_{211}=\frac{1}{6}, h_{212}=\frac{1}{4}, h_{221}=\frac{1}{2}, h_{222}=\frac{2}{3}$$
$$
l_{112}=\frac{1}{15}, l_{122}=\frac{1}{20}, l_{212}=\frac{1}{20}, l_{222}=\frac{1}{15}.
$$
The functions $H_{ij}$ and $L_{ij}$  can be built in a similar way as in \cite{gipp-nonlin} by choosing, for example,
$$
H_{11}(w)=\left\{
\begin{array}{l}
\frac{1}{2}w,\;\;0 \leq w \leq 1,\\
\frac{1}{6}w+\frac{1}{3},\;\; w \geq 1,
\end{array}
\right.
\quad
L_{11}(w)=\frac{1}{11}\bigl(1+\sin \bigl(w-\frac{\pi}{2} \bigr) \bigr).
$$

The choice $[a_1,b_1]=[a_2,b_2]=[1/4,3/4]$ gives
\begin{equation*}
c_1=1/4,\, c_2=45\sqrt{3}/128,\, c_{11}=c_{12}=c_{21}=1/4,\, c_{22}=45\sqrt{3%
}/128,
\end{equation*}
\begin{equation*}
m_1=8,\, M_1=16,\,m_2=384/5,\, M_2=768/5.
\end{equation*}
We have that 

$$\beta_{11}[\gamma_{11}]=\beta_{12}[\gamma_{12}]=\frac{3}{4}, \beta_{11}[\gamma_{12}]=\beta_{12}[\gamma_{11}]=\frac{1}{4}, \beta_{21}[\gamma_{21}]=\frac{2}{3}, \beta_{21}[\gamma_{22}]=\frac{4}{81}, 
$$
$$\beta_{22}[\gamma_{21}]=\frac{1}{3}, \beta_{22}[\gamma_{22}]=\frac{5}{81}, \delta_{11}[1]=\delta_{12}[1]=\delta_{21}[1]=\delta_{22}[1]=1.$$

Since $\mathcal{K}_{ij}(s) =k_i(\eta_{ij},s)$ we obtain
\begin{equation*}
\int_{0}^{1}\mathcal{K}_{11}(s)\,ds= \int_{0}^{1}\mathcal{K}_{12}(s)\,ds= \frac{3}{32},
\int_{1/4}^{3/4}\mathcal{K}_{11}(s)\,ds= \int_{1/4}^{3/4}\mathcal{K}_{12}(s)\,ds= \frac{1}{16},
\end{equation*}
\begin{equation*}
\int_{0}^{1}\mathcal{K}_{21}(s)\,ds= \int_{0}^{1}\mathcal{K}_{22}(s)\,ds=\frac{11}{972},
\int_{1/4}^{3/4}\mathcal{K}_{11}(s)\,ds= \int_{1/4}^{3/4}\mathcal{K}_{22}(s)\,ds= \frac{3985}{497664}.
\end{equation*}
Then, for $\rho_1=1/8$, $\rho_2=1$ and $\rho_3=11$, we have (the constants that follow have been rounded to 2 decimal places unless exact)\begin{align*}
\inf \Bigl\{ f_1(t,u,v):\; (t,u,v)\in [1/4,3/4]\times[0,1/2]\times[0, 1/2]%
\Bigr\}&=f_1(1/4,0,0)>14.81 \rho_1, \\
\sup \Bigl\{ f_1(t,u,v):\; (t,u,v)\in[0,1]\times[0, 1]\times[0, 1]\Bigr\}%
&=f_1(1,1,1)<2.97 \rho_2, \\
\sup \Bigl\{ f_2(t,u,v):\; (t,u,v)\in[0,1]\times[0,1]\times[0,1]\Bigr\}%
&=f_2(1,1,1)<53.93 \rho_2, \\
\inf \Bigl\{ f_1(t,u,v):\; (t,u,v)\in [1/4,3/4]\times[11,44]\times[0, 44]%
\Bigr\}&=f_1(1/4,11,0)>14.81 \rho_3, \\
\inf \Bigl\{ f_2(t,u,v):\; (t,u,v)\in [1/4,3/4]\times[0,44]\times[11, 44]%
\Bigr\}&=f_2(1/4,0,11)>141.49 \rho_3,
\end{align*}
that is the conditions $(\mathrm{I}^{0}_{\rho_{1}})^{\star}$, $(\mathrm{I}%
^{1}_{\rho_{2}})$ and $(\mathrm{I}^{0}_{\rho_{3}})$ are satisfied; therefore
the system~\eqref{eq2} has at least two positive solutions in $K$.
\end{ex}

\section*{Acknowledgments}
The authors would like to thank Dr. Ing. Antonio Madeo of the Dipartimento di Ingegneria Informatica, Modellistica, Elettronica e Sistemistica - DIMES, Universit\`a della Calabria, for shedding some light on the physical interpretation of the problem.

\end{document}